\newcommand\cyr{%
 \renewcommand\rmdefault{cmr}%
 \renewcommand\sfdefault{wncyss}%
 \renewcommand\encodingdefault{OT2}%
\normalfont\selectfont}
\DeclareTextFontCommand{\textcyr}{\cyr}
\newtheorem{thm}{Theorem}[section]
\newtheorem{prop}[thm]{Proposition}
\newtheorem{lemma}[thm]{Lemma}
\newtheorem{conj}[thm]{Conjecture}
\theoremstyle{definition}
\theoremstyle{remark}
\newtheorem{rk}[thm]{Remark}
\newcommand\into{\hookrightarrow}
\newcommand\Q{\mathbb{Q}}
\newcommand\A{\mathbb{A}}
\newcommand\Z{\mathbb{Z}}
\newcommand\Char{\mathop{\rm char}\nolimits}
\newcommand{\disc}{\operatorname{disc}}
\newcommand{\Pic}{\operatorname{Pic}}
\newcommand{\To}{\longrightarrow}
\newcommand{\BP}{{\mathbb P}}
\begin{document}
\title[Explicit Kummer varieties]{Explicit Kummer varieties of hyperelliptic Jacobian threefolds}

\author{J.\thinspace{}Steffen M\"uller}
\address{Institut f\"ur Mathematik,
         Carl von Ossietzky Universit\"at Oldenburg,
         26111~Oldenburg, Germany}
\email{jan.steffen.mueller@uni-oldenburg.de }

\date{\today}

\maketitle

\begin{abstract}
We explicitly construct the Kummer variety associated to the Jacobian of a hyperelliptic
curve of genus~3 that is defined over a field of
characteristic not equal to~2 and has a rational Weierstrass point defined over the
same field.
We also construct homogeneous quartic polynomials on the Kummer variety and show
that they represent the duplication map using results of Stoll. 
\end{abstract}
\section{Introduction}\label{intro}
Let $A$ be an abelian variety of dimension $g\ge 1$, defined over a field $k$.
The quotient of $A$ by the map which takes a point on $A$ to its inverse is a singular projective
variety which can also be defined over $k$ and which can be embedded into
$\BP^{2^g-1}$.
It is called the {\em Kummer variety} $K$ associated to $A$.
The complex case is discussed in~\cite[\S4.8]{CAV}.

If $A$ is an elliptic curve, then $K$ is simply the projective line over $k$.
If $A$ is the Jacobian of a genus~2 curve $C$, then this construction yields the classical
singular Kummer surface, which can be embedded as a projective hypersurface into $\BP^3$.
In the case where $\Char(k) \ne 2$ and $C$ is given by an equation of the form $y^2=f(x)$, 
an embedding and defining equation of the Kummer surface have been constructed by
Flynn~\cite{Flynnb}, see also the exposition in Chapter~3 of the book~\cite{CasselsFlynn} by Cassels-Flynn.

A particularly useful feature of the Kummer surface $K$ is that parts of the group structure on $A$ remain meaningful on $K$. 
For instance, translation by a~2-torsion point and multiplication by an integer~$n$ on $A$
commute with negation and hence descend to well-defined maps on $K$.
Moreover, one can define a pseudoaddition on $K$ using certain biquadratic forms $B_{ij}$.
Formulas for duplication, translation by a~2-torsion point and pseudoaddition have 
been found by Flynn~\cite{Flynnb} and can be downloaded
from~\url{http://people.maths.ox.ac.uk/flynn/genus2/kummer/}.
Analogues of these for the case $\Char(k)=2$ have been found by Duquesne~\cite{SD} and,
independently, a unified treatment for arbitrary $k$ and arbitrary defining equations
$y^2+h(x)y=f(x)$ of $C$ has been presented by the author in~\cite{Kchar2}.

In the present paper we discuss analogues of some of these objects for Jacobians of hyperelliptic curves
of genus~3 with a $k$-rational Weierstrass point.
In this case an embedding of the Kummer variety into $\BP^7$ has been constructed by
Stubbs~\cite{Stubbs} and we recall his construction in Section~\ref{sec:embedding}.
When $\Char(k) \notin \{2,3,5\}$, we find a complete set of defining equations for the image of $K$ under this embedding in
Section~\ref{sec:eqns}, at least for generic $C$; it turns out that in this case $K$ can be defined as the intersection of one 
quadric and~34 quartics in $\BP^7$. The genericity assumption is not hard to check for a
given curve and we conjecture that it is always satisfied.

In Section~\ref{sec:remnants} we discuss traces of the group structure on the Jacobian
that can be exhibited on the Kummer variety.
Duquesne has constructed a matrix $W_T$ representing translation by a~2-torsion point
$T\in A$ on $K$.
It turns out that under a suitable genericity assumption, biquadratic forms $B_{ij}$ as in genus~2 cannot exist in our situation,
see Proposition~\ref{nobij}.
However, building on work of Duquesne, we construct homogeneous quartic polynomials
$\delta_1, \ldots, \delta_8\in k[x_1,\ldots,x_8]$ which we conjectured to 
represent duplication on the Kummer variety in an earlier version of this article (and
in~\cite{MuellerThesis}). 
This conjecture can now be proved using recent results of
Stoll, see Theorem~\ref{deltaconj}.

Stoll has recently announced the construction of a different embedding $\xi$ of 
$K$ which is valid for arbitrary hyperelliptic genus~3 curves~\cite{StollKummer} over
fields of characteristic~$\ne 2$, see also
the talk slides \url{http://www.mathe2.uni-bayreuth.de/stoll/talks/Luminy2012.pdf}.
His embedding uses $\Pic^4(C)$, which is canonically isomorphic to $\Pic^0(C)$.
This is analogous to the description of the dual Kummer surface associated to a
genus~2 curve $C$ in terms of $\Pic^3(C)$ due to Cassels and Flynn~\cite[\S4]{CasselsFlynn}
(in dimension~3, the Kummer variety is self-dual).
Moreover, Stoll has found defining equations for his embedding of the Kummer variety, and,
using representation-theoretic arguments,
polynomials representing duplication and forms that can be used for pseudoaddition on $K$.

The formulas described in this paper are mostly too long to be reproduced here. 
They can be obtained at~\cite{HP}.
\subsection{Applications}\label{sec:apps}
In genus~2, the Kummer surface has several arithmetic applications. 
The first application is an addition algorithm on $A$ that uses pseudoaddition on $K$, see
\cite{FlynnSmart}.
Suppose that $k$ is a number field.
Letting $h$ denote the naive height on $\BP^3_k$, we get an induced naive height on the Jacobian
which can be used to search for points in $A(k)$ of bounded height.
Stoll's program~{\tt j-points} which uses this approach 
is available from \url{http://www.mathe2.uni-bayreuth.de/stoll/programs/index.html}.
Furthermore, this naive height can be used to
define and compute a canonical height $\hat{h}$ on $A$, which has numerous applications. 
See Flynn-Smart~\cite{FlynnSmart} for the construction and a first algorithm for the computation of
$\hat{h}$.
Several refinements are presented by Stoll in~\cite{StollH1} and~\cite{StollH2} and by the
author in his thesis~\cite[Chapter~3]{MuellerThesis}; see also the forthcoming
paper~\cite{MuellerStoll}.

In the genus~3 situation, we do not have an explicit description of pseudoaddition and hence we do
not get a similar addition algorithm on $A$ using the results of the present paper (such
an algorithm is given by Stoll in~\cite{StollKummer}).
We do get a height function on $K$ by restriction of the standard
height function on $\BP^7$ and an an induced naive height $h(P)=h(\kappa(P))$ on the
Jacobian, where $\kappa:A\to K\into \BP^7$ is discussed in Section~\ref{sec:embedding}. 
Using the defining equations of $K$ presented in Section~\ref{sec:eqns}, we get an 
algorithm that lists all $k$-rational points on $K$ or on $A$ up to a given
height bound.
We can also define a canonical height function
\[
\hat{h}(P)=\lim_{n\to\infty} 4^{-n} h(\kappa(2^nP)).
\]
Using Theorem~\ref{deltaconj} one can prove analogs of several results from~\cite{StollH2} and
obtain an algorithm for the computation of $\hat{h}$ as in~\cite{StollH2}.
Further details are given in $\S4.4$ of~\cite{MuellerThesis}.

An explicit theory of heights on Jacobians of hyperelliptic curves of genus~3 has recently been developed by Stoll in~\cite{StollKummer}
using his embedding $\xi$.
Algorithms for the computation of the canonical height on Jacobians of hyperelliptic curves of
any genus have been introduced by Holmes~\cite{holmes:height1} and the
author~\cite{mueller:computing}.
However, these are not easily related to a naive height suitable for point searching, as
is required by standard algorithms for saturation of finite index subgroups of the
Mordell-Weil group, such as in~\cite{StollH2}. 
A solution to this problem has recently been proposed by Holmes~\cite{holmes:height2}.

\subsection*{Acknowledgements}
This work grew out of Chapter~4 of my PhD thesis~\cite{MuellerThesis} at the University of Bayreuth.
I would like to thank my supervisor Michael Stoll for his constant help and encouragement
and for sharing a preliminary version of~\cite{StollKummer} with me.
I would also like to thank Sylvain Duquesne, Victor Flynn, Damiano Testa and Tzanko Matev for
helpful conversations and the referee for useful remarks.
Part of this work was done while I was visiting the Universit\'e Rennes~I and the
University of Oxford and I thank both institutions for their hospitality.
Finally, I would like to acknowledge support from DFG through DFG grants~STO~299/5-1 and~KU~2359/2-1.

\section{Embedding the Kummer variety}\label{sec:embedding}
In his PhD thesis, Stubbs~\cite{Stubbs} has found an explicit embedding of the Kummer variety associated to
the Jacobian of a hyperelliptic curve of genus~3 with a rational Weierstrass point into
$\BP^{7}$.
In this section we recall this embedding, also providing formulas for the image on $K$ of
non-generic points on the Jacobian.

We first fix some notation that we will use throughout this paper.
Let $k$ denote a field of characteristic $\Char(k)\ne 2$. 
We consider a hyperelliptic genus~3 curve $C$ over $k$, given by an equation
\begin{equation}\label{ceq}
  Y^2  = F(X,Z),
\end{equation}
in the weighted projective plane over $k$ with respective weights 1, 4 and~1 assigned to the
variables $X$, $Y$ and~$Z$, where
\[
F(X,Z)=f_0Z^8+f_1XZ^7+f_2X^2Z^6+f_3X^3Z^5+f_4X^4Z^4+f_5X^5Z^3+f_6X^6Z^2+f_7X^7Z
\]
is a binary octic form in $k[X,Z]$ without multiple factors such that $\deg_X(F(X,Z))=7$.
Then there is a unique point $\infty\in C$ whose $Z$-coordinate is~0.
Every hyperelliptic genus~3 curve over $k$ with a $k$-rational Weierstrass point
has an equation of the form~\eqref{ceq} over $k$.
We let $A$ denote the Jacobian of $C$ and we let $K$ denote its Kummer variety.

Every point $P\in A$ has a representative of the form
\begin{equation}\label{3rep}
 (P_1)+(P_2)+(P_3)-3(\infty),
\end{equation}
 where $P_1,P_2,P_3\in C$, and this representation is unique unless two of the $P_i$ are
 swapped by the hyperelliptic involution.
We call a point $P\in A$ {\em generic}
if  $P$ can be represented by an unordered triple of points
$(x_1,y_1,1),(x_2,y_2,1),(x_3,y_3,1)\in C$ such that all $x_i$ are pairwise distinct. 

Let $\Theta$ denote the theta-divisor on $A$ with respect to the point $\infty$.
It is well-known that $\Theta$ is ample (cf.~\cite{mumford:equations1}) and
that $2\Theta$ is base point free (cf.~\cite[\S II.6]{mumford:av}).
Hence a basis of $\mathcal{L}(2\Theta)$ gives an embedding of $K$. 
Note that $\mathcal{L}(2\Theta)$ is equivalent to a certain space of symmetric functions on
$C^3$ with restrictions on the poles as in~\cite{Flynna} or~\cite{Stubbs}.
Using this approach, Stubbs~\cite[Chapter~3]{Stubbs} has found the following basis $\kappa_1,\ldots,\kappa_8$ of the space $\mathcal{L}(2\Theta)$:
\begin{eqnarray*}
\kappa_1&=&1,\\
\kappa_2&=&x_1+x_2+x_3,\\
\kappa_3&=&x_1x_2+x_1x_3+x_2x_3,\\
\kappa_4&=&x_1x_2x_3,\\
\kappa_5&=&b_0^2-f_7\kappa_2^3+f_7\kappa_3\kappa_2-f_6\kappa_2^2+3f_7\kappa_4+2f_6\kappa_3,\\
\kappa_6&=&\kappa_2b_0^2+2b_0b_1-f_7\kappa_2^4+3f_7\kappa_3\kappa_2^2-f_6\kappa_2^3-f_7\kappa_3^2-f_7\kappa_4\kappa_2+2f_6\kappa_3\kappa_2-f_5\kappa_2^2\\
        &&+2f_5\kappa_3,\\
\kappa_7&=&b_1^2-\kappa_3b_0^2+f_7\kappa_3\kappa_2^3-2f_7\kappa_3^2\kappa_2+f_6\kappa_3\kappa_2^2+f_7\kappa_4\kappa_3-f_6\kappa_3^2+f_5\kappa_3\kappa_2-3f_5\kappa_4,\\
\kappa_8&=&\kappa_2b_1^2+2\kappa_3b_0b_1+\kappa_4b_0^2+f_7\kappa_3^2\kappa_2^2-f_7\kappa_2^3\kappa_4+f_7\kappa_2\kappa_3\kappa_4-f_7\kappa_3^3+f_6\kappa_3^2\kappa_2\\&&-f_6\kappa_4\kappa_2^2+f_5\kappa_3^2-f_5\kappa_4\kappa_2,
\end{eqnarray*}
where
\begin{eqnarray*}
b_0&=&(x_1y_2-x_2y_1-x_3y_2+x_3y_1-x_1y_3+x_2y_3)/d,\\
b_1&=&(x_3^2y_2-x_3^2y_1+x_2^2y_1+y_3x_1^2-y_2x_1^2-y_3x_2^2)/d,\\
d&=&(x_1-x_2)(x_1-x_3)(x_2-x_3).
\end{eqnarray*}
The map $\kappa:A\to \BP^7$
\[
    \kappa(P)=(\kappa_1(P),\ldots,\kappa_8(P))
\]
defines an embedding of the Kummer variety into $\BP^7$.
We also provide formulas for the values of $\kappa(P)$ when $P$ is not generic, since no
such formulas have been given by Stubbs.
Following Mumford, we can represent any $P\in A(k)$ as a pair of homogeneous forms
\[(A(X,Z),B(X,Z)),\]
where $A,\,B \in k[X,Z]$ have homogeneous degree~4 and~2, respectively.
If $P$ is generic, then 
$b_0$ and $b_1$ as defined above are simply the constant and linear coefficient of $B(X,1)\in
k[X]$.

 Suppose that $P$ is generic, satisfying $x_1x_2x_3\ne0$, and write the $\kappa_i(P)$ in
 terms of $z_j=1/x_j$ and $w_j=y_j/x_j$, $j\in\{1,2,3\}$. 
 We then multiply by the common denominator and set $w_3=0$. 
 This leads to the following formulas for $P$ having a unique representative of the form
$((x_1,y_1,1))+((x_2,y_2,1))-2(\infty)
$
and satisfying $x_1\ne x_2$:
\begin{eqnarray*}
\kappa_1(P)&=&0,\\
\kappa_2(P)&=&1,\\
\kappa_3(P)&=&x_1+x_2,\\
\kappa_4(P)&=&x_1x_2,\\
\kappa_5(P)&=&f_5+2f_6\kappa_3(P)+f_7\kappa_3(P)^2+2\kappa_4(P)f_7,\\
\kappa_6(P)&=&f_4+f_5\kappa_3(P)-f_7\kappa_4(P)\kappa_3(P),\\
\kappa_7(P)&=&-f_4\kappa_3(P)-3f_5\kappa_4(P)+f_7\kappa_4(P)^2,\\
\kappa_8(P)&=&\left(f_3\kappa_3(P)^3+f_1\kappa_3(P)+f_2\kappa_3(P)^2+2f_0-2y_1y_2+f_4\kappa_4(P)\kappa_3(P)^2-3f_3\kappa_4(P)\kappa_3(P)\right.\\&&\left.-2f_2\kappa_4(P)
+f_5\kappa_4(P)^2\kappa_3(P)-2f_4\kappa_4(P)^2+f_7\kappa_4(P)^3\kappa_3(P)+2f_6\kappa_4(P)^3\right)/(x_1-x_2)^2. 	
\end{eqnarray*}
For the case  $x_1=x_2$ it suffices to use the same $\kappa_1,\ldots,\kappa_7$ as above and 
\[
\kappa_8(P)=b_1^2+(\kappa_4(P)-\kappa_3(P)^2)(-2f_7\kappa_4(P)\kappa_3(P)-f_6\kappa_4(P)+f_7\kappa_3(P)^3+f_6\kappa_3(P)^2+f_5\kappa_3(P)+f_4),
\]
where $b_1$ is the linear coefficient of $B(X,1)\in
k[X]$ if the Mumford representation of $P$ is $(A,B)$.
 
Now consider points represented by 
\[
((x_1,y_1))-(\infty).
\]
We first look at quotients of the form $\kappa_i(P)/\kappa_5(P)$, where $P$ is again
assumed generic, and then take the limit $(x_2,y_2,1)\to (x_3,-y_3,1).$The result is 
\[
\kappa(P)=(0,0,0,0,1,-x_1,x_1^2,x_1^3).
\]
A similar argument shows that we have
\[
\kappa(O)=(0,0,0,0,0,0,0,1),
\]
where $O\in A$ is the identity element.

If $P\in A$, then we say that $x=(x_1,\ldots,x_8)\in \A^8$ is a \emph{set of Kummer coordinates
for $P$} if $\kappa(P)=(x_1:\ldots:x_8)$. 
We set
\[
K_{\A}:=\{(x_1,\ldots,x_8)\in \A^8:\;\exists \,Q\in K\text{ such that }Q=(x_1:\ldots:x_8)\}.
\]
\begin{rk}
In the general case $\deg_X F=8$ Stubbs constructs functions analogous to the
functions $\kappa_i$. 
However, these do not give an embedding of the Kummer variety, since not all points on $A$
can be represented by unordered triples of points on $C$.
See~\cite[\S3.8]{Stubbs} for a discussion.
\end{rk}
\begin{rk}\label{StollTrans}
  As discussed in the introduction, Stoll~\cite{StollKummer} has recently constructed an embedding $\xi =
(\xi_1,\xi_2,\xi_3,\xi_4,\xi_5,\xi_6,\xi_7,\xi_8)$ of the Kummer
variety into $\BP^7$  which is valid for arbitrary hyperelliptic curves $C:Y^2=F(X,Z)$ of genus~3
defined over a field of characteristic~$\ne 2$.
When $\deg_X F =7$, his embedding is related to Stubbs' embedding $\kappa$ as follows:
\[
\begin{array}{ccccccccccccccccc}
  \xi_1& = &\kappa_1 & & & & & & & & & & & & & & \\
  \xi_2& = & & &-f_7\kappa_2 & & & & & & & & & & & & \\
  \xi_3& = & & & & &f_7\kappa_3 & & & & & & & & & & \\
  \xi_4& = & & & & & & &-f_7\kappa_4 & & & & & & & & \\
  \xi_5& = &f_4\kappa_1 &+ &f_5\kappa_2 &+ &2f_6\kappa_3 &+ &3f_7\kappa_4 & -& \kappa_5& & & & & & \\
  \xi_6& = &f_3\kappa_1 &+ & f_4\kappa_2 &+ &f_5\kappa_3 & & & & & -& \kappa_6& & & & \\
  \xi_7& = &f_2\kappa_1 & & &- &f_4\kappa_3 &- &3f_5\kappa_4 & & & & & -& \kappa_7& & \\
  \xi_8& = & && -f_2f_7\kappa_2 &- &f_4f_7\kappa_3 &- &f_4f_7\kappa_4 & & & & & & &+ & f_7\kappa_8\\
\end{array}
\]
\end{rk}

\section{Defining equations for the Kummer variety}\label{sec:eqns}
In this section we compute defining equations for the Kummer variety $K$, embedded into
$\BP^7$ as in the previous section.

The following result seems to be well-known to experts in algebraic geometry, but no proof seems to
exist in the literature.
The proof given here was suggested by Tzanko Matev. 
\begin{prop}\label{KumQuart}
Let $A$ be a Jacobian variety of dimension $g\ge2$ defined over an arbitrary field and let $\Theta$ be a theta-divisor on $A$.
If $\kappa_1,\ldots,\kappa_{2^g}$ is a basis for $\mathcal{L}(2\Theta)$ and if
$\kappa=(\kappa_1,\ldots,\kappa_{2^g}):A\to\BP^{2^g-1}$, then the image $\kappa(A)$ can 
be described as an intersection of quartics.
\end{prop}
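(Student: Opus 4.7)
My plan is to deduce the assertion from Mumford's theorem on equations defining abelian varieties, which implies that the $|4\Theta|$-embedding $\phi_{4\Theta}\colon A \hookrightarrow \BP(V_4^*)$ is scheme-theoretically cut out by quadrics, where $V_4 := H^0(A, \O(4\Theta))$.

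Set $V := H^0(A, \O(2\Theta))$. First I analyze the multiplication-of-sections map $\sigma\colon S^2V \to V_4$. Since $\Theta$ is symmetric, $V$ is entirely $[-1]^*$-invariant, so the image $W := \Im(\sigma)$ lies in the $[-1]^*$-invariant subspace $V_4^+ \subseteq V_4$; a dimension count $\dim S^2V = 2^{g-1}(2^g+1) = \dim V_4^+$ shows that $\sigma\colon S^2V \xrightarrow{\sim} W = V_4^+$ is an isomorphism. Writing $\nu_2\colon \BP(V^*) \hookrightarrow \BP((S^2V)^*)$ for the quadratic Veronese and $\pi_W\colon \BP(V_4^*) \to \BP(W^*)$ for the (rational) linear projection dual to $W \hookrightarrow V_4$, the identification $\BP(W^*) \cong \BP((S^2V)^*)$ via $\sigma$ yields a commutative diagram
\[
  \pi_W \circ \phi_{4\Theta} \;=\; \nu_2 \circ \kappa,
\]
since both sides evaluate at $P \in A$ to the tuple $[\kappa_i(P)\kappa_j(P)]_{i\le j}$.

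Next I transfer defining equations. Each quadric $q \in S^2V_4$ in the ideal of $\phi_{4\Theta}(A)$ restricts to $S^2W \cong S^2(S^2V)$ and, via the natural surjection $S^2(S^2V) \twoheadrightarrow S^4V$ induced by pullback through $\nu_2$, produces a quartic $\tilde q \in S^4V$ that vanishes on $K$. To finish, I must show these quartics $\tilde q$ cut out $K$ set-theoretically: if $x \in \BP(V^*)$ satisfies all the $\tilde q$, then $\nu_2(x) \in \pi_W(\phi_{4\Theta}(A)) = \nu_2(K)$, and since $\nu_2$ is injective, $x \in K$.

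The main technical obstacle is this last step, in which one must verify that the ideal of $\pi_W(\phi_{4\Theta}(A)) \subset \BP(W^*)$ is generated by restrictions of the defining quadrics of $\phi_{4\Theta}(A)$. This reduces to two points: (i) $\pi_W$ is regular on $\phi_{4\Theta}(A)$, which follows from the base-point-freeness of $|W|$—itself inherited from that of $|2\Theta|$, since $\kappa_i(P)^2 \ne 0$ whenever $\kappa_i(P) \ne 0$; and (ii) a Reynolds-operator-style argument that, because $W$ is the invariant part of $V_4$ under the $[-1]^*$-action and the quotient $\phi_{4\Theta}(A)/[-1]^* = K$ is normal, every defining relation of the image is induced from a $[-1]^*$-invariant relation already present in $\phi_{4\Theta}(A)$, and hence arises from a quadric in $S^2 W$ of the Mumford form.
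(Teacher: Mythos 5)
Your argument has a genuine gap at its foundation: the claim that the multiplication map $\sigma\colon S^2V\to V_4$ is an isomorphism onto the invariant part $V_4^+$. The dimension count $\dim S^2V=\binom{2^g+1}{2}=4^g/2+2^{g-1}=\dim V_4^+$ is correct, but equal dimensions do not give injectivity, and $\sigma$ is in fact \emph{not} injective in general. It fails to be injective precisely in the case this paper treats: for $g=3$ the kernel of $\sigma$ contains the quadratic relation~\eqref{R1}, so $d(2)=35<36=e(2)$ and $W=\Im(\sigma)$ is a \emph{proper} subspace of $V_4^+$ (the paper states this explicitly in Section~\ref{sec:eqns}: there is an even function in $\mathcal{L}(4\Theta)$ not coming from a quadratic monomial in the $\kappa_i$). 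Once $W\subsetneq V_4^+$ and $\ker\sigma\ne 0$, your identification $\BP(W^*)\cong\BP((S^2V)^*)$ collapses ($\BP(W^*)$ is only a proper linear subspace, and the Veronese image of $\BP(V^*)$ does not lie in it --- only $\nu_2(K)$ does), and your concluding step~(ii) has no content, since it rests entirely on $W$ being \emph{the} $[-1]^*$-invariant subspace of $V_4$; for a general proper subspace $W$ there is no averaging argument showing that the quadrics of $\phi_{4\Theta}(A)$ lying in $S^2W$ cut out the projected image.

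The paper avoids this by never assuming injectivity: it takes a maximal linearly independent subset $q_1,\dots,q_d$ of the quadratic monomials ($d\le m$), extends it to a basis of $\mathcal{L}(4\Theta)$ defining $\beta$, and builds a degree-one map $\gamma$ to $\BP^{m-1}$ that reproduces the remaining $m-d$ coordinates from the linear relations among the $q_i$; then $\gamma(\beta(A))=\iota(\kappa(A))$, Mumford's result gives quadric generation for $\beta(A)$, and this is transported through $\gamma$ and pulled back to quartics along the Veronese $\iota$. Your observation~(i) --- that the linear system $W$ is base point free on $A$ because the squares $\kappa_i^2$ lie in $W$ and $|2\Theta|$ is base point free, so the projection is regular on $\phi_{4\Theta}(A)$ --- is correct and is needed in the paper's argument as well. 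But to repair your proof you would have to replace the isomorphism $S^2V\cong V_4^+$ and the invariance argument of step~(ii) by an argument valid for the actual image $W$, which is essentially what the paper's construction of $\gamma$ accomplishes.
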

\begin{proof}
Let $\mathcal{Q}=\{q_1,\ldots,q_m\}$ denote the set of monic quadratic monomials in the
$\kappa_i$, where  $m=$${2^g+1}\choose 2$ and we assume, without loss of generality, that
$\{q_1,\ldots,q_d\}$ is linearly independent in the space $\Q(f_0,\ldots,f_7)[q_1,\ldots,q_m]$,
where $d\le m$ is the dimension of the space generated by the elements of $\mathcal{Q}$.

Let $\iota$ denote the 2-uple embedding of $\BP^{2^g-1}$ into $\BP^{m-1}$ such that for $P\in A$ we have 
\[
\iota_i(\kappa(P))=q_i(P)\quad\text{for all }i\in\{1,\ldots,m\}.
\]
Then there are $m-d$ linear relations on the image of $K=\kappa(A)$ under $\iota$.
Now consider an embedding $\beta:A\into\BP^{4^g-1}$ given by a basis of
$\mathcal{L}(4\Theta)$ whose first $d$ elements are equal to $q_1,\ldots,q_d$. Then we have a commutative diagram
\[
\xymatrix{A\ar[d]_{\kappa}\ar@{^(->}[r]^{\beta} &\BP^{4^g-1}\ar@{.>}[d]_{\gamma}\\
\BP^{2^g-1} \ar@{^{(}->}[r]_{\iota}&\BP^{m-1}&}
\]
where $\gamma$ is a rational map defined as follows: If $z=(z_1,\ldots,z_{4^g})$, then
$\gamma(z)=y$, where $y_i=z_i$ for $i=1,\dots,d$ and the other $y_i$ are determined by the
linear relations on $\mathcal{Q}$. 
By construction, we have that $\beta(A)$ is contained in the domain of $\gamma$ and in fact
\[
\gamma(\beta(A))\cong\iota(\kappa(A)).
\]
But it follows from the corollary on page~349 of~\cite{mumford:equations1} that the image of $A$ under $\beta$ is
defined by an intersection of quadrics, which then must hold for $\gamma(\beta(A))$ as
well, since $\gamma$ is defined by homogeneous polynomials of degree~1. 
As the pullback under $\iota$ of $\gamma(\beta(A))$ is isomorphic to $K$, the result follows.
\end{proof}

By Proposition~\ref{KumQuart} it suffices to find a basis for the space of quartic relations on $K$ to describe $K$. We first compute a lower bound on the dimension of this space.
For $n\ge1$ let $m(n)$ denote the number of monic monomials of degree $n$ in $\kappa_1,\ldots,\kappa_{2^g}$ and let $d(n)$ denote the dimension of the space spanned by them. Then we have $m(n)=$${2^g+n-1}\choose n$. Moreover, let $e(n)$ denote the dimension of the space of even functions in $\mathcal{L}(2n\Theta)$. By~\cite[Corollary~4.7.7]{CAV} this is equal to $(2n)^g/2+2^{g-1}$. Since a monomial of degree $n$ in the $\kappa_i$ induces an even function in $\mathcal{L}(2n\Theta)$, we always have $d(n)\le e(n)$.

In genus~2, the dimension count is given in Table~\ref{g2dims}. 
\begin{center} \begin{table}[h!]
\begin{tabular}{|c|c|c|c|}
\hline
 $n$ &$m(n)$ & $e(n)$ & $d(n)$  \\
\hline
1 &4&4&4 \\
2 &10&10&10 \\
3 &20&20&20 \\
4 &35&34&34 \\
\hline
\end{tabular}
\caption{Dimensions in genus~2}\label{g2dims}
\end{table}
\end{center}
\vspace{-7mm}
We know that $d(4)$ can be at most $e(4)=34$, and indeed the space of quartic relations
in $\kappa_1, \ldots, \kappa_4$ is one-dimensional, spanned by the Kummer surface equation.

In genus~3, Stubbs has found the following quadratic relation between the $\kappa_i$ and
shown that for a generic curve it is the unique quadratic relation up to scalars.
\begin{equation}\label{R1}
R_1:\kappa_1\kappa_8-\kappa_2\kappa_7-\kappa_3\kappa_6-\kappa_4\kappa_5-2f_5\kappa_2\kappa_4+f_5\kappa_3^2+2f_6\kappa_3\kappa_4+3f_7\kappa_4^2=0
\end{equation}
The dimensions for genus~3 are presented in Table~\ref{g3dims}. The existence and
uniqueness of $R_1$ implies that $d(2)=35$, but since $e(2)=36$, this means that there is
an even function in $\mathcal{L}(4\Theta)$ not coming from a quadratic monomial in the
$\kappa_i$, which does not happen in genus~2. Accordingly, we can at this point only bound
$d(3)\le~112$ and $d(4)\le~260$ from above. 

It follows that in genus~3 there must be at least~$70=330-260$ quartic relations on the Kummer
variety. But $36$ of these are multiples of the quadratic relation $R_1$.
Moreover, there are only~8 cubic relations and they are all multiples of $R_1$.
Hence there must be at least~34 independent irreducible quartic relations.
 \begin{table}\begin{tabular}{|c|c|c|c|}
\hline
 $n$ &$m(n)$ & $e(n)$ & $d(n)$  \\
\hline
1 &8&8&8 \\
2 &36&36&35 \\
3 &120&112&112 \\
4 &330&260&260 \\
\hline
\end{tabular}\caption{Dimensions in genus~3}\label{g3dims}\end{table}
In~\cite[Chapter~5]{Stubbs} Stubbs lists~26 quartic relations and
conjectures that together with $R_1$ these relations are independent and form a basis of the space of all
relations on the Kummer variety. His relations are at most quadratic in
$\kappa_5,\ldots,\kappa_8$. 
Using
current computing facilities we can verify the former conjecture quite easily, but because
of our dimension counting argument, we know that the latter conjecture cannot hold. 

To compute a complete set of defining equations for the Kummer variety we employ the technique already used by Stubbs. 
Because of the enormous
size of the algebra involved in these computations, simply searching for relations
among all monomials is not feasible.
Instead we split the monomials into parts of equal $x$-weight and
$y$-weight. These are homogeneous weights discussed in~\cite[\S3.5]{Stubbs} that were
already used by Flynn in~\cite{Flynna} in the genus~2 situation.
See Table~\ref{xywt}.
\begin{table}[h!]\begin{tabular}{|c|c|c|}
\hline
 &$x$ &$y$  \\
\hline
$x_i$&1&0\\
$y_i$&0&1\\
$f_i$&$-i$&2\\
$\kappa_i,i\le4$&$i-1$&0\\
$\kappa_i,i>4$&$i-9$&$2$\\
\hline
\end{tabular}\caption{$x$- and $y$-weight}\label{xywt}\end{table}
\\On monomials of equal $x$- and $y$-weight we can use linear algebra to find relations; we
continue this process with increasing weights until we have found enough quartic relations
to generate a space of dimension~70. The difficulty of this process depends essentially on
the $y$-weight. 
We used {\tt Magma}~\cite{Magma} to find~34 relations $R_2,\ldots,R_{35}$ on $K$, of $y$-weight at
most~8, such that the following result holds:
\begin{lemma}\label{relsindep}
Let $C$ be defined over a field $k$ of characteristic~$\ne 2,3,5$. 
Then the space \[\{R_2,\ldots,R_{35}\}\cup\{\kappa_i\kappa_jR_1:1\le i\le j\le8\}\] has
dimension equal to~70.
\end{lemma}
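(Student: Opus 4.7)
The assertion is a linear independence statement for $70$ explicit homogeneous quartic polynomials in $\kappa_1,\ldots,\kappa_8$ inside the $k$-vector space $V$ of such polynomials, which has dimension $\binom{11}{4}=330$. My plan is a direct rank computation on a coefficient matrix, organised by the natural weight structure of the problem.

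First I would assemble the $70\times 330$ matrix $M$ whose rows encode, in the monomial basis of $V$, the $34$ relations $R_2,\ldots,R_{35}$ (explicit expressions available from~\cite{HP}) together with the $36$ products $\kappa_i\kappa_jR_1$ for $1\le i\le j\le 8$. Its entries lie in $\Z[f_0,\ldots,f_7]$, and showing $\mathrm{rk}(M)=70$ reduces to exhibiting a $70\times 70$ submatrix whose determinant is nonzero in $k$ after specialisation to the coefficients of~$F$.

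To keep this feasible I would decompose $M$ using the $x$- and $y$-weights of Table~\ref{xywt}. The relations $R_1,R_2,\ldots,R_{35}$ are all bihomogeneous for these weights, and multiplication by a product $\kappa_i\kappa_j$ preserves bihomogeneity, so $M$ splits as a block-diagonal matrix indexed by $(x,y)$-bidegree. The total rank is the sum of the ranks of the blocks, and each block is small enough to row-reduce directly in \texttt{Magma} over $\Q(f_0,\ldots,f_7)$.

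The role of the hypothesis $\Char(k)\notin\{2,3,5\}$ becomes transparent only once the row-reduction is carried out: on inspection, the $\Z$-content of the pivots (equivalently, the nonzero $70\times 70$ minors witnessing full rank) is divisible only by the primes $2$, $3$ and $5$, so the argument goes through precisely when these are invertible in $k$. The principal obstacle is not theoretical but computational: without the bihomogeneous decomposition the matrix $M$ would be too large to handle comfortably, and even with it the verification is substantial, so presenting it as a collection of small block-by-block checks, together with the observation on the denominators, is what makes the proof both feasible and characteristic-aware.
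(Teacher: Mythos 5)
Your overall strategy --- reduce to a rank computation on the $70\times 330$ coefficient matrix and certify rank $70$ by a nonvanishing $70\times 70$ minor --- is exactly what the paper does, and the weight decomposition is a sensible way to organise the computation. But there is a genuine gap in how you certify nonvanishing. The lemma is stated for an \emph{arbitrary} curve $C$ over a field of characteristic $\ne 2,3,5$, so the chosen minor must be shown nonzero after \emph{every} specialisation of $(f_0,\ldots,f_7)$ to the coefficients of $F$. Row-reducing over $\Q(f_0,\ldots,f_7)$ and inspecting the $\Z$-content of the pivots does not achieve this: the content only records the integer factor, and a pivot (or minor) of the form $2^a3^b5^c\cdot g(f_0,\ldots,f_7)$ with $g$ a nonconstant primitive polynomial can still vanish for particular curves even when $2,3,5$ are invertible in $k$. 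Your argument as written therefore proves the statement only for generic $C$, together with a list of candidate bad primes.

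The paper closes this gap by exhibiting a $70\times 70$ minor whose determinant is the \emph{constant integer} $2^{46}\cdot 3^4\cdot 5^5$, independent of the $f_i$; this single fact simultaneously handles all specialisations and makes the role of the hypothesis $\Char(k)\notin\{2,3,5\}$ explicit. To repair your proof you would need either to locate such a constant minor (which the block structure may help you find, since you can look for blocks whose entries are themselves integers), or to argue separately that the specialisation map cannot drop the rank --- the latter is not automatic and is precisely what the constant determinant buys.
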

\begin{proof}
It suffices to show that the matrix of coefficients of the relations expressed as linear
combinations of the quartic monomials in the~$\kappa_i$ has rank~70.
Using~{\tt Magma}, we find a $70\times 70$-minor of this matrix whose determinant is equal
to the integer $2^{46}\cdot3^4\cdot5^5$. 
\end{proof}
These relations, as well as code for the verification alluded to in the proof,
can be downloaded at~\cite{HP}.
One can also verify that when $\Char(k) \in \{2,3,5\}$, the relations generate a space of
dimension strictly less than~70.

In order to show that for a given curve $C$ over a field $k$ of characteristic
$\notin\{2,3,5\}$ the
relations $R_1,\ldots,R_{35}$ actually generate all relations on the associated Kummer
variety $K$, we need to show that we have $d(4)=260$ for $C$.
We were unable to verify this for an arbitrary curve $C$ since the necessary manipulations seem
infeasible with current computing facilities.
However, by finding a single curve $C$ for which the space of cubic (resp. quartic) homogeneous polynomials has
dimension exactly~112 (resp.~260), we can conclude that we have $d(3)=112$ and $d(4)=260$
generically.
For this one can use, for instance, the curve given by $Y^2=Z^8+X^7Z$.

Using Proposition~\ref{KumQuart} we have therefore proved the following:
\begin{thm}\label{g3rels}
  Let $C:Y^2=F(X,Z)$ be a generic hyperelliptic curve of genus~3 defined over a field of
  characteristic $\ne 2,3,5$ and having a rational Weierstrass point at $\infty$. 
Then the relations on the Kummer threefold are generated by the relations $R_1,\ldots,R_{35}$.
\end{thm}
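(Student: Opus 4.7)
My plan is to combine the dimension bookkeeping that immediately precedes the statement with Proposition~\ref{KumQuart}. The latter reduces the task to verifying that, for a generic $C$, the low-degree part of the homogeneous vanishing ideal $I(\kappa(A))$ is already contained in the ideal generated by $R_1,\ldots,R_{35}$. In degrees $\le 3$, only the quadric $R_1$ and the cubic multiples $\kappa_i R_1$ appear (as noted in the paragraphs preceding the theorem), so the entire content of the proof is an equality of degree-$4$ pieces.

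The space of quartic monomials in $\kappa_1,\ldots,\kappa_8$ has dimension $m(4)=330$, and the universal bound $d(4)\le e(4)=260$ forces the degree-$4$ piece of $I(\kappa(A))$ to have dimension at least $70$. In the reverse direction, Lemma~\ref{relsindep} exhibits a $70$-dimensional subspace of that piece, namely the span of $\{R_2,\ldots,R_{35}\}\cup\{\kappa_i\kappa_j R_1:1\le i\le j\le 8\}$. Matching the two bounds reduces the theorem to the single equality $d(4)=260$ for generic $C$.

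For this I would exhibit one specific curve with $d(4)=260$ and then appeal to lower semi-continuity of matrix rank. A convenient choice is $C_0:Y^2=Z^8+X^7Z$, whose binary octic $Z(X^7+Z^7)$ is squarefree so that $C_0$ is a valid smooth genus-$3$ curve with a rational Weierstrass point at $\infty$. A direct \texttt{Magma} rank computation on the matrix representing the $330$ quartic monomials in the $\kappa_i$ as even functions in $\mathcal{L}(8\Theta)$ then verifies $d(4)=260$ for $C_0$. Because this matrix has entries polynomial in $f_0,\ldots,f_7$, the locus $\{d(4)=260\}$ is Zariski open in the parameter space of octics, and this open locus is exactly the meaning of ``generic'' in the statement.

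The main obstacle is the concrete rank computation for $C_0$: even with the sparse choice of $F$, one must manipulate several hundred quartic monomials in eight variables whose evaluations involve symmetric polynomials in three formal points of $C_0$, and only a system like \texttt{Magma} makes this feasible. The characteristic exclusions $\{2,3,5\}$ arise from the primes dividing the $70\times 70$ minor determinant $2^{46}\cdot 3^4\cdot 5^5$ of Lemma~\ref{relsindep}, so they are a genuine feature of the result rather than an artifact of the method.
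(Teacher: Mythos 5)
Your proposal follows essentially the same route as the paper: Proposition~\ref{KumQuart} reduces the problem to degree four, the bound $d(4)\le e(4)=260$ together with Lemma~\ref{relsindep} pins the space of quartic relations at exactly $70$ once $d(4)=260$ is known, and that equality is verified for the single curve $Y^2=Z^8+X^7Z$ and propagated to generic $C$ by semicontinuity of rank. The only quibble is your claim that $Z(X^7+Z^7)$ is squarefree, which fails in characteristic $7$ (a characteristic permitted by the hypotheses); this is harmless because, as the paper notes, $d(n)$ is defined even for degenerate curves, so the specialization argument needs only the rank computation and not smoothness of the test curve.
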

In practice, it is rather easy to check for a given curve whether $d(4) = 260$
and we provide code for doing so at~\cite{HP}.
We have carried this out for more than~10.000 curves $C$, including degenerate curves (i.e.,
such that $\disc(F) = 0$). Note that the definition of
$d(n)$ makes sense for degenerate curves.
In all cases we have found $d(4) = 260$, so it stands to reason that this should be true in
general:
\begin{conj}\label{d4260}
  Let $k$ be a field of characteristic $\ne2,3,5$ and let $F(X,Z) \in k[X,Z]$ be a
  binary octic form such that $\deg_X(F) =7$. 
  Then we have $d(4) = 260$.
\end{conj}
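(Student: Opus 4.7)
The plan is to upgrade Theorem~\ref{g3rels} from the generic case to every parameter value. Since $d(4) \le e(4) = 260$ is automatic, it suffices to prove $d(4) \ge 260$ for all binary octics $F$ with $\deg_X F = 7$ over a field of characteristic $\notin \{2,3,5\}$. Equivalently, the image under the multiplication/evaluation map $\phi$ of the $330$-dimensional space of quartic monomials in $\kappa_1,\ldots,\kappa_8$ must have rank $\ge 260$ for every specialization of $(f_0,\ldots,f_7)$.

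I would proceed by a universal semicontinuity argument. Work over $R = \Z[\tfrac{1}{30}][f_0,\ldots,f_7]$ and treat $\phi$ as an $R$-linear map whose matrix $M$ has polynomial entries in the $f_i$. By Theorem~\ref{g3rels} the generic rank of $M$ over $R$ is $260$, so by lower semicontinuity of rank the set $U=\{\sigma\in\Spec R : d(4)(\sigma) = 260\}$ is open and dense. The task is to show $U=\Spec R$.

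To do this, I would attempt to exhibit an explicit $260 \times 260$ minor $\Delta$ of $M$ whose determinant is a unit of $R$, in direct analogy with Lemma~\ref{relsindep} (which produced a $70\times 70$ minor on the \emph{kernel} side with constant integer determinant $2^{46}\cdot 3^4\cdot 5^5$). A natural way to search for such a $\Delta$ is to exploit the $x$- and $y$-weight bigrading from Table~\ref{xywt}: the matrix $M$ splits into bigraded blocks, so one can hunt for full-rank square submatrices inside each block and then assemble them into a $260\times 260$ minor with a factored (hence tractable) determinant. The weaker target sufficient for the conjecture is that the ideal generated by all $260\times 260$ minors of $M$ is the unit ideal in $R$.

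The main obstacle is scale. Even with the bigrading, the individual blocks involve polynomials of substantial degree in the eight variables $f_0,\ldots,f_7$, and current computer algebra systems struggle to certify that such a symbolic determinant is nowhere vanishing (away from the primes $2,3,5$). The paper's verification in more than $10{,}000$ cases provides very strong numerical evidence that such a minor exists, but producing one symbolically --- or, alternatively, finding a theoretical shortcut (for instance, via the $\mathrm{SL}_2$-equivariant structure on binary octics, or via projective-normality results extended to the family of generalised Jacobians covering the degenerate locus $\disc F = 0$) --- is what would be required to convert the conjecture into a theorem.
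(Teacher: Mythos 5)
You have not proved the statement, and the paper does not prove it either: this is stated as Conjecture~\ref{d4260} precisely because the final step of your outline is open. Your reduction is sound as far as it goes --- since $d(4)\le e(4)=260$ always, the claim is that the evaluation matrix $M$ of the $330$ quartic monomials has rank exactly $260$ for \emph{every} specialization of $(f_0,\ldots,f_7)$ with $f_7\ne 0$ over a field of characteristic $\notin\{2,3,5\}$; the generic case follows from the single example $Y^2=Z^8+X^7Z$ together with lower semicontinuity of rank, which is exactly what the paper does to obtain Theorem~\ref{g3rels}. But semicontinuity only yields an open dense locus $U\subseteq\Spec R[f_7^{-1}]$ on which $d(4)=260$, and the entire content of the conjecture is the passage from ``open dense'' to ``everything.'' You identify the right sufficient condition (the ideal of $260\times260$ minors of $M$ being the unit ideal), but you do not produce a minor with unit determinant, a certificate that the minor ideal is the unit ideal, or a theoretical argument (equivariance, projective normality of degenerating Jacobians) that actually closes the case $\disc F=0$ or other special fibres. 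Your own final paragraph concedes this.

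Two further cautions. First, the analogy with Lemma~\ref{relsindep} is weaker than it looks: that lemma concerns a $70\times70$ minor of the \emph{relation} matrix whose determinant happened to be a nonzero integer, which is a much smaller computation; the paper explicitly reports that for $d(4)$ ``the necessary manipulations seem infeasible with current computing facilities,'' and for the analogous rank-$35$ assertion in Proposition~\ref{nobij} that it ``proved impossible to even write down a single $35\times35$-minor over the function field.'' So the computational route you propose is known to be out of reach as stated, and a genuinely new idea (your parenthetical ``theoretical shortcut'') would be needed. Second, to match the conjecture exactly you must work over $R[f_7^{-1}]$ (the hypothesis is $\deg_X F=7$, not merely that $F$ is an octic), and you should include the degenerate locus $\disc F=0$, where $A$ is no longer an abelian variety, so any argument leaning on properties of $\Theta$ or of $\mathcal{L}(2n\Theta)$ must be reformulated --- the paper notes only that the \emph{definition} of $d(n)$ still makes sense there, not that the surrounding geometry does.
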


\section{Remnants of the group law}\label{sec:remnants}
Keeping the notation of the previous section, 
we now investigate which remnants of the group law on $A$ can be exhibited on $K$.
Namely, we recall results of Duquesne, show that analogues of the biquadratic form
representing pseudoaddition on the Kummer surface cannot exist in our situation and
conjecture formulas for duplication on $K$.
In light of Theorem~\ref{g3rels}, we assume that $C$ is defined over a field $k$ of
characteristic coprime to~30.
Note, however, that in the present section we do not have to assume that
$d(4) =260$.

Let $T$ be a~2-torsion point on $A$. Duquesne~\cite[\S~III.2.1]{SDthesis} has found a matrix $W_T$ such that projectively the identity
\[
\kappa(P+T)=W_T\cdot\kappa(P)
\]
holds for all $P\in A$ if we view $\kappa(P)$ and $\kappa(P+T)$ as column vectors. 
Duquesne's method of
finding $W_T$ is analogous to the method employed by Flynn~\cite{Flynnb} in the genus~2 case, 
although there are a few additional technical difficulties. 
We also have that if $T\in A(k)[2]$, then $W_T$ is defined over $k$.

Now let $P,Q\in A$. Then in general $\kappa(P+Q)$ and $\kappa(P-Q)$ cannot be found from
$\kappa(P)$ and $\kappa(Q)$, but the unordered pair $\{\kappa(P+Q),\kappa(P-Q)\}$ can be
found. 
In other words, the map from $\mathrm{Sym}^2(K)$ to itself that maps
$\{\kappa(P),\kappa(Q)\}$ to $\{\kappa(P+Q),\kappa(P-Q)\}$ is well-defined.
In fact, in the analogous situation in genus~2 there are biquadratic forms $B_{ij}\in
k[x_1,\ldots,x_4;y_1,\ldots,y_4]_{2,2}$ with the following property:
If $x$ and $y$ are Kummer coordinates for $P$ and $Q$, respectively, then there are 
Kummer coordinates $w$ for $\kappa(P+Q)$ and $z$ for $\kappa(P-Q)$ such that
\begin{equation}\label{wzBxy}
 w\ast z=B(x,y)
\end{equation}
holds.
Here~\eqref{wzBxy} is an abbreviation for 
\begin{eqnarray*}
 B_{ij}(x,y)&=&w_i z_j + w_j z_i\text{   for   }i\ne j\\
 B_{ii}(x,y)&=&w_i z_i.
\end{eqnarray*}
The following result says that in general such biquadratic forms cannot exist in genus~3.
\begin{prop}\label{nobij}
Let $A$ be the Jacobian of a generic hyperelliptic curve $C$ of genus~3
with a $k$-rational Weierstrass point, given
by an equation ~\eqref{ceq}, and let $K$ be the Kummer variety associated to $A$.
Then there is no set of biquadratic forms $B_{ij}(x,y)$, where $1\le i,j\le 8$, satisfying the following: 
If $x$ and $y$ are sets of Kummer coordinates for $P,Q\in A$, respectively, then there are Kummer coordinates $w,z$ for $P+Q,P-Q$, respectively, such that~\eqref{wzBxy} holds.
\end{prop}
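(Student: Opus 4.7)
The plan is to assume such biquadratic forms exist and derive a contradiction via a dimension count on $A\times A$. Set
\[
F_{ij}(P,Q) := \kappa_i(P+Q)\kappa_j(P-Q) + \kappa_j(P+Q)\kappa_i(P-Q),
\]
so the $F_{ij}$ are functions on $A\times A$ invariant under $P\mapsto -P$ and under $Q\mapsto -Q$ separately (since $\kappa_k\circ[-1]=\kappa_k$). The existence of the $B_{ij}$ is equivalent to saying that the rational map $K\times K\dashrightarrow \BP^{35}$ sending $\{\kappa(P),\kappa(Q)\}$ to the symmetric tensor built from $\{\kappa(P+Q),\kappa(P-Q)\}$ is given by sections of $\mathcal{O}(2,2)|_{K\times K}$ pulled back from $\BP^7\times\BP^7$; equivalently, each $F_{ij}$ must agree on $A\times A$ with a biquadratic polynomial in the $\kappa_a(P),\kappa_b(Q)$, up to a common rational rescaling factor $\lambda(P,Q)\mu(P,Q)$ that comes from the scaling freedom in the representatives $w,z$ and can be absorbed.

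To identify the ambient line bundle, let $\mu,\nu:A\times A\to A$ denote addition and subtraction. Since $\Theta$ is symmetric, one checks by restricting to the axes $A\times\{0\}$ and $\{0\}\times A$ and observing that the Hom-piece of $\mathrm{NS}(A\times A)$ vanishes, that $\mu^*(2\Theta)+\nu^*(2\Theta)\sim 4\Theta\boxtimes 0 + 0\boxtimes 4\Theta$. Hence each $F_{ij}$ lies in $\mathcal{L}(\mu^*(2\Theta)+\nu^*(2\Theta))\cong \mathcal{L}(4\Theta)\otimes\mathcal{L}(4\Theta)$, and its bi-evenness places it in the subspace
\[
\mathcal{L}(4\Theta)^{\mathrm{even}}\otimes \mathcal{L}(4\Theta)^{\mathrm{even}},
\]
of dimension $e(2)^2 = 36\cdot 36 = 1296$. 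On the other hand, polynomials of bidegree $(2,2)$ in $\kappa(P),\kappa(Q)$ span only the smaller subspace generated by $\kappa_a\kappa_b(P)\kappa_c\kappa_d(Q)$, of dimension $d(2)^2=35^2=1225$ under the genericity hypothesis of Table~\ref{g3dims}. The strict inequality $d(2)<e(2)$ --- the decisive genus-$3$ phenomenon, with no analogue in genus~$2$ where $d(2)=e(2)=10$ --- means the biquadratic subspace is $71$-dimensional smaller than the space where the $F_{ij}$ actually live.

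The main obstacle, and the concluding step of the proof, is to verify that at least one of the $F_{ij}$ genuinely protrudes outside the biquadratic subspace, i.e., has a nonzero component along the extra even function $\eta\in\mathcal{L}(4\Theta)$ (in either tensor factor) that witnesses $d(2)<e(2)$. I would do this by an explicit computation on a single concrete generic curve such as $Y^2=Z^8+X^7Z$ (used at the end of Section~\ref{sec:eqns}): take Mumford representatives for generic $P,Q$, use the formulas of Section~\ref{sec:embedding} to compute $\kappa(P\pm Q)$, form the $F_{ij}$, and verify in \texttt{Magma} that the $36$-dimensional subspace they span is not contained in the $1225$-dimensional image of biquadratic monomials in $\kappa$. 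Since the property ``some $F_{ij}$ lies outside the biquadratic span'' is Zariski-open in the coefficients of $C$, a single positive example establishes the result for generic $C$.
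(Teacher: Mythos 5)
Your setup is a legitimate and genuinely different framework from the paper's: you translate the existence of the $B_{ij}$ into the statement that the bi-even functions $F_{ij}=\mu^*\kappa_i\cdot\nu^*\kappa_j+\mu^*\kappa_j\cdot\nu^*\kappa_i$ (after trivializing $\mu^*(2\Theta)+\nu^*(2\Theta)\sim 4p_1^*\Theta+4p_2^*\Theta$) must lie in the $d(2)^2=1225$-dimensional span of biquadratic monomials inside the $e(2)^2=1296$-dimensional space of bi-even sections, and you correctly identify $d(2)<e(2)$ as the genus-$3$ phenomenon that makes non-existence possible. But the decisive step is missing: the inequality $1225<1296$ only shows that there is \emph{room} for some $F_{ij}$ to escape the biquadratic subspace, not that any of them actually does. (In genus~$2$ the same framework gives $d(2)=e(2)$ and the forms exist; nothing in your dimension count rules out the analogous coincidence here.) You defer exactly this point to an unperformed \texttt{Magma} computation, so as written the argument proves nothing; it reduces the proposition to a conjecture. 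A secondary gap is the phrase ``can be absorbed'': to replace the pointwise scaling freedom in $w,z$ by a single rational function $c(P,Q)$ independent of $(i,j)$, you need to argue that the $F_{ij}$ have no common zeros (which follows from $2\Theta$ being base point free) and that the resulting $c$ has the divisor of the trivializing function, hence is that function up to a constant; this is standard but is doing real work and should be spelled out. Also, ``the Hom-piece of $\NS(A\times A)$ vanishes'' is false as stated --- what you mean is that the Hom-component of the particular class $\mu^*(2\Theta)+\nu^*(2\Theta)-4p_1^*\Theta-4p_2^*\Theta$ vanishes, because the contributions from $\mu$ and $\nu$ cancel.

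For comparison, the paper's proof avoids the global dimension count entirely and instead extracts a concrete contradiction. It forms the combination $R_1(B)$ of the putative forms modelled on the quadratic relation~\eqref{R1}, specializes $x$ at Kummer coordinates of each of the $64$ two-torsion points $T$ (where $P+T=P-T$ forces $B_{ij}(x(T),y)=2z_iz_j$ with $z$ satisfying~\eqref{R1}), and uses a rank computation --- that the $36\times 64$ matrix of quadratic monomials in the $x(T_j)$ has rank $35$ for a generic curve --- to conclude that $R_1(B)$ vanishes identically after normalization. Evaluating on the diagonal $Q=P$ with $2P$ generic, the coordinates of $P-P=O$ force $B_{ij}(x,x)=0$ for $i,j\le 7$, whence $R_1(B)=0$ gives $B_{18}(x,x)=0$ and therefore $\kappa_1(2P)=0$, contradicting genericity. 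That argument also rests on a machine verification, but one that is actually carried out and reported. If you want to salvage your route, you must either perform and exhibit the computation showing some $F_{ij}$ lies outside the biquadratic span for one explicit curve, or find a structural reason (as the paper does via $R_1$ and the diagonal) forcing the obstruction to be nonzero.
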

\begin{proof}
We can work geometrically, so we assume $k$ is algebraically closed and let $C$ be a
hyperelliptic curve of genus~3 defined over $k$, given by a septic model $Y^2=F(X,Z)$, with
Jacobian $A$. 
Let us fix Kummer coordinates $x(T)=(x(T)_1,\ldots,x(T)_8)$ for all $T\in A[2]$. 
 
For each $T\in A[2]$ we get a map
\[
 \pi_T: k[x_1,\ldots,x_8;y_1,\ldots,y_8] \To k[y_1,\ldots,y_8],
\]
given by evaluating the tuple $x=(x_1,\ldots,x_8)$ at $x(T)$. This induces a map
\[
 \pi_T:\frac{k[x_1,\ldots,x_8;y_1,\ldots,y_8]_{2,2}}{(R_1(x),R_1(y))}\To \frac{k[y_1,\ldots,y_8]_2}{(R_1(y))}.
\]
Suppose a set of
forms $B_{ij}(x,y)$, $1\le i,j \le 8$, as in the statement of the proposition does exist
and consider 
\begin{equation}\label{R1B}
R_1(B):=B_{18}-B_{27}-B_{36}-B_{45}-2f_5B_{24}+2f_5B_{33}+2f_6B_{34}+6f_7B_{44}.
\end{equation}
Denote by $\overline{R_1(B)}$ the image of $R_1(B)$ in $\frac{k[x_1,\ldots,x_8;y_1,\ldots,y_8]_{2,2}}{(R_1(x),R_1(y))}$.
For $T\in A[2]$, arbitrary $P\in A$ and a set of Kummer
coordinates $y$ for $P$, we have:
If $B(x(T),y)=w\ast z$, then $w$ and $z$ are both Kummer coordinates for $P+T=P-T$,
and thus, if $x(T)$ and $y$ are scaled suitably so that $z=w$, we must have $B_{ij}(x(T),y)=2z_iz_j$ for $1\le i\ne j\le8$ and
$B_{i,i}(x(T),y)=z_i^2$ for $i\in\{1,\ldots,8\}$.
As an element of $K_\A$, the tuple $z$ must satisfy~\eqref{R1} and hence this implies
\begin{equation}\label{r1b0}
 \pi_T(\overline{R_1(B)})=R_1(z)=0\quad\text{ for all }T\in A[2]. 
\end{equation}
We claim that $\overline{R_1(B)}$ itself vanishes. In order to show this, we fix $T\in A[2]$ and let 
\[
S(T)=\{s_1(T),\ldots,s_{36}(T)\}=\{x(T)_ix(T)_j:1\le i\le j\le 8\}.
\]
We also fix a representative
\[
 \sum^8_{j=1}\sum^8_{l=1} \lambda_{T,j,l}\cdot y_j\cdot y_l 
\]
of $\pi_T(\overline{R_1(B)})$, where 
\[
 \lambda_{T,j,l}=\sum^{36}_{m=1}\mu_{T,j,l,m}\cdot s_m(T)
\]
is linear in the $s_m(T)$ and we require that $\lambda_{T,1,8}=0$, which uniquely determines our representative.

From~\eqref{r1b0} we know that we must have
\[
  \lambda_{T,j,l}=0
\] for all $j,l$ and for all $T\in A[2]$ and thus we get~64 linear equations
\[\sum^{36}_m\mu_{T,j,l,m}\cdot s_m(T)=0.\]

For notational purposes, denote the elements of $A[2]$ by $\{T_1,\ldots,T_{64}\}$.
It can be shown that the matrix $(s_i(T_j))_{1\le i\le 36,1\le j\le 64}$ has generic rank
equal to~35, by showing that this holds for a particular curve $C$, for instance for
$C:Y^2=X(X-Z)(X-2Z)(X-3Z)(X-4Z)(X-5Z)(X-6Z)$.
As usual, code for this computation can be found at~\cite{HP}.
It follows that for a generic curve $C$, any linear relation between the $s_i(T)$ satisfied by all 
$T\in A[2]$ must be a multiple of $R_1(x(T)_1,\ldots,x(T)_8)$. 

Suppose now that $C$ is generic, in the sense that  $(s_i(T_j))_{1\le i\le 36,1\le j\le
64}$ has rank~35.
Then $\overline{R_1(B})$ must vanish.
The upshot of this is that if we require our $B_{ij}(x,y)$ to contain no multiples of,
say, $x_1x_8$ or $y_1y_8$ as summands (which we can always arrange by applying
\eqref{R1}), then $R_1(B)=0$ follows. 

Now let $P\in A$  such that $2P$ is generic in the sense of
Section~\ref{sec:embedding} and let $x$ be a set of Kummer coordinates for $P$. 
Note that since $A$ is complete, such a point $P$ always exists.
Because any set of Kummer coordinates $z$ for $P-P =0$ satisfies $z_i =0$ for
$i=1,\ldots,7$ and $z_8\ne0$, we get $B_{ij}(x,x) = 0$ for $1\le i,j \le 7$
from~\eqref{wzBxy}.
However, $R_1(B) =0$ implies that also $B_{18}(x,x)=0$.
But for a set $w$ of Kummer coordinates for $2P$ we have $w_1 = B_{18}(x,x)$ up to a
nonzero rational factor.
This contradicts the assumption that $2P$ is generic in the sense of
Section~\ref{sec:embedding}, since for such points the first Kummer coordinate can never
vanish.
\end{proof}
\begin{rk}
  The assertion rk$(s_i(T_j))_{1\le i\le 36,1\le j\le 64} = 35$ can be verified easily
  for any given curve; we have tested about~1.000.000 examples and found that the rank
  was indeed~35 in all cases.
  However, we were unable to show this for arbitrary $C$. 
  For instance, it proved
  impossible to even write down a single $35\times35$-minor over the function field.
\end{rk}
Propostion~\ref{nobij} implies that the situation is rather more complicated than in genus~2. 
Recall Flynn's strategy to compute the biquadratic forms in genus~2 (see~\cite{Flynnb} or
\cite{CasselsFlynn}): If  $T\in A[2]$ and $P\in A$ is arbitrary, then we can compute 
\[
\kappa_i(P+T)\kappa_j(P-T)+\kappa_j(P+T)\kappa_i(P-T)=2\kappa_i(P+T)\kappa_j(P+T)
\]
projectively for all $i$ and $j$ by multiplying the matrix $W_T$ by the vector $\kappa(P)\in
k^4$. Using some algebraic manipulations, Flynn ensures that the resulting forms $B'_{ij}$
are biquadratic in the $\kappa_i(P)$ and the $\kappa_j(T)$ and satisfy some additional
normalization conditions. One can then check that the space generated by all $\kappa_i(T)\kappa_j(T)$, 
where $i\le j$, has dimension~10. Hence for each pair $(i,j)$ at most one biquadratic form that 
satisfies the same normalization conditions can specialize to $B'_{ij}$. 
The crucial point is that from classical theory of theta functions we already know that
biquadratic forms $B_{ij}$ satisfying~\eqref{wzBxy} must exist -- at least in the complex case (see Hudson's book~\cite{Hudson}) and thus, using the Lefshetz principle, for any algebraically closed field of characteristic~0.  Therefore Flynn concludes that $B_{ij}=B'_{ij}$ for all $i,j$.
 
We can try to use the same strategy in the genus~3 case. Indeed, in~\cite[\S~III.2.2]{SDthesis},
Duquesne computes the correct $B'_{ij}(x,y)$ in the special case that $x$ is a set of
Kummer coordinates for $T\in A[2]$. They can be downloaded from \url{ftp://megrez.math.u-bordeaux.fr/pub/duquesne}. Because of
the relation~\eqref{R1}, we know that the space of all $\kappa_i(T)\kappa_j(T)$, where
$i\le j$, is not linearly independent. But we also know that it has dimension~35, since $R_1$ is the only quadratic relation up to a constant factor. 
Now we can apply $R_1(x)$ and $R_1(y)$ to the $B'_{ij}(x,y)$ to make sure that no terms containing, say $x_1x_8$ or $y_1y_8$ appear and this is done by Duquesne. Thus we can draw the same conclusion as in the genus~2 situation, namely that for each pair $(i,j)$ at most one biquadratic form that satisfies the same normalization conditions can specialize to $B'_{ij}$. 
By Proposition~\ref{nobij},  we know that there is no set of biquadratic forms on $K$
satisfying~\eqref{wzBxy} in general. 
But we conjecture that we can still make use of the $B'_{ij}$ as follows:

We define two index sets 
\[
     I:=\{(i,j):1\le i\le j\le8\},
\]
and
\[
     E:=\{(1,8),(2,7),(3,6),(4,5),(5,5),(5,6),(5,7),(6,6)\}\subset I.
\]
We say that a pair of points $(P,Q)\in A\times A$ is {\em good} if there is a pair
$(i_0,j_0)\in I\setminus E$ such that if $x$ and $y$ are Kummer coordinates for $P$ and
$Q$, respectively, and $w$ and $z$ denote Kummer coordinates for $P+Q$ and $P-Q$,
respectively, then we have
\begin{enumerate}[(i)]
\item $B'_{i_0j_0}(x,y)\ne0$;
\item $w_{i_0}\ne 0$;
\item $z_{j_0}\ne 0$.
\end{enumerate}
If $(P,Q)$ is a good pair and $x,y,w,z$ are as above, then we can normalize $w$ and $z$ such 
that $w_{i_0}z_{j_0}=B'_{i_0j_0}(x,y)$. 
For $1\le i,j\le 8$ we define  $\alpha_{i,j}(x,y)$ as follows:
\begin{equation}\label{aij}
\alpha_{ij}(x,y):=w_iz_j+w_jz_i-B'_{ij}(x,y).
\end{equation}

Building on a large number of numerical experiments we state a list of conjectures regarding the relations between $B'_{ij}(x,y)$ and $w_iz_j+w_jz_i$:
\begin{conj}\label{kg3conj}
Suppose that $(P,Q)\in A\times A$ is a good pair with respective Kummer coordinates $x$
and $y$.
Then the following properties are satisfied:
\begin{itemize}
\item[(a)] We have $\alpha_{ij}(x,y)=0$ for $(i,j)\in I\setminus E$.
\item[(b)] The identities
\[
 -\alpha_{1,8}(x,y)=\alpha_{2,7}(x,y)=\alpha_{3,6}(x,y)=\alpha_{4,5}(x,y)
 \]
 and
 \[
 \alpha_{5,7}(x,y)=-2\alpha_{6,6}(x,y)
 \]
 hold. 
 \item[(c)] If $\alpha_{i_1j_1}(x,y)=0$ for some $(i_1,j_1)\in E$, then all $\alpha_{ij}(x,y)$ vanish.
 \item[(d)]
 If $\alpha_{i_1j_1}(x,y)\ne0$ for some $(i_1,j_1)\in E$, then we have
$\alpha_{ij}(x,y)\ne0$ for all $(i,j)\in E$. If this holds and if $(i,j),(i'j')\in E$, then the ratios 
 \[
 \frac{\alpha_{i'j'}(x,y)}{\alpha_{ij}(x,y)},  
 \]
only depend on $C$ and on $(i,j),(i',j')$, but not on $x$ or $y$. 
\end{itemize}
\end{conj}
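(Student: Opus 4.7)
The plan is to exploit the fact that the forms $B'_{ij}(x,y)$ of Duquesne are constructed precisely so that the desired identity $w \ast z = B'(x,y)$ holds when $x$ is a set of Kummer coordinates for a $2$-torsion point $T \in A[2]$: in that case $P+T = P-T$, so $w = z$ up to scaling and each $w_i z_j + w_j z_i$ equals $2 w_i w_j = B'_{ij}(x(T), y)$ in a consistent normalization. Hence $\alpha_{ij}(x(T), y) = 0$ whenever $T \in A[2]$ and $y$ is a set of Kummer coordinates for some $Q \in A$ making $(T,Q)$ a good pair, for every $(i,j) \in I$. The task is to upgrade this $64$-point vanishing, together with the algebraic constraint $R_1(B) \equiv 0$, to the structural statements (a)--(d).

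For part (a), fix $y$ and view $\alpha_{ij}(\cdot, y)$ as a quadratic form in $x = (x_1,\ldots,x_8)$ modulo multiples of $R_1(x)$. It vanishes on the $64$ points $x(T)$, $T \in A[2]$. The rank-$35$ statement established in the proof of Proposition~\ref{nobij} -- that the only quadratic in the Kummer coordinates vanishing on every $x(T)$ for generic $C$ is a multiple of $R_1(x)$ -- applies: under Duquesne's normalization (no $x_1 x_8$ or $y_1 y_8$ summands), this forces $\alpha_{ij}(x,y)$ to vanish as a form on $K \times K$, provided the normalization does not itself absorb a spurious multiple of $R_1(x)$. The pairs $(i,j) \in E$ are precisely those for which the relation $R_1(B) = 0$ of equation~\eqref{R1B} couples the normalization of $B'_{ij}$ with $R_1(x)$, allowing a nonzero residual; for all other pairs, part (a) drops out.

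Part (b) is then purely algebraic: the identities among the remaining $\alpha_{ij}$ follow from $R_1(B) \equiv 0$ in equation~\eqref{R1B} combined with the relations $R_1(w) = 0$ and $R_1(z) = 0$ satisfied by any Kummer coordinates of $P+Q$ and $P-Q$. Writing $w_i z_j + w_j z_i = B'_{ij}(x,y) + \alpha_{ij}(x,y)$ and plugging into the formal linear combination $R_1$, the $B'$-contribution vanishes modulo $(R_1(x), R_1(y))$, while the Kummer-variety equations for $w$ and $z$ eliminate the ``diagonal'' polarization terms. Matching coefficients in the surviving linear relation produces exactly the identities among $\alpha_{1,8}, \alpha_{2,7}, \alpha_{3,6}, \alpha_{4,5}, \alpha_{5,7}, \alpha_{6,6}$ claimed in~(b); the remaining two exceptional pairs $(5,5)$ and $(5,6)$ are not constrained by this single relation and their behaviour must be extracted from the rigidity statements (c) and (d).

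Parts (c) and (d) are the core difficulty and constitute the main obstacle. The natural strategy is to produce a single nonzero biquadratic form $\Phi(x,y)$ on $K \times K$, unique up to scalar, with the property that $\alpha_{ij}(x,y) = \lambda_{ij}\,\Phi(x,y)$ for every $(i,j) \in E$, where each $\lambda_{ij} \in k$ depends only on $C$. Such a $\Phi$ would yield (c) at once via $\alpha_{ij}(x,y) = 0 \iff \Phi(x,y) = 0$, and (d) via $\alpha_{i'j'}(x,y)/\alpha_{ij}(x,y) = \lambda_{i'j'}/\lambda_{ij}$. The natural candidate for $\Phi$ is dictated by the discrepancy $e(2) - d(2) = 1$ observed in Section~\ref{sec:eqns}: there is a one-dimensional space of even elements of $\mathcal{L}(4\Theta)$ not lying in the span of $\{\kappa_i\kappa_j\}$, and the genuine obstruction to pseudoaddition-by-biquadratic-forms should live precisely along this line, bi-extended to $K \times K$. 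A character-theoretic decomposition of the relevant space of biquadratic forms under the natural action of $A[2]$, in the spirit of Stoll~\cite{StollKummer}, looks the most promising route to pin down $\Phi$ and the scalars $\lambda_{ij}$; a direct verification of proportionality on a Zariski-dense open in $A \times A$, upgraded to a global identity via the bound $d(4) \le 260$, is the pragmatic backup.
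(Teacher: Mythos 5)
The statement you are trying to prove is stated in the paper as a \emph{conjecture}: the author offers no proof at all, only the remark that it is ``based on a large number of numerical experiments.'' So there is no argument in the paper to compare yours against, and the first thing to say is that your text is likewise not a proof but a research plan --- you say so yourself for parts (c) and (d), which you describe as ``the core difficulty'' and for which you only name candidate strategies. As it stands, the conjecture remains open after your proposal.

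Beyond that, the parts you do claim to settle have concrete gaps. For (a), you propose to ``view $\alpha_{ij}(\cdot,y)$ as a quadratic form in $x$'' and invoke the rank-$35$ statement from the proof of Proposition~\ref{nobij}. But $\alpha_{ij}(x,y)=w_iz_j+w_jz_i-B'_{ij}(x,y)$ is \emph{not} a quadratic form in $x$ for fixed $y$: the whole content of Proposition~\ref{nobij} is that the symmetrized products $w_iz_j+w_jz_i$ are not biquadratic in $(x,y)$. At best, for fixed $Q$ the function $P\mapsto w_iz_j+w_jz_i$ is an even element of $\mathcal{L}(4\Theta)$, hence differs from a quadratic form in the $\kappa_i(P)$ by a multiple of the one extra even function reflected in $e(2)-d(2)=1$; the rank-$35$ lemma says nothing about that extra component, and your argument does not control it (nor does it explain why the exceptional set is exactly $E$ rather than being dictated pair by pair by this component). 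For (b), the single linear functional coming from \eqref{R1B} together with $R_1(w)=R_1(z)=0$ produces at most \emph{one} linear relation among the $\alpha_{ij}$ with $(i,j)\in E$ (and even that relation has the polar form $R_1(w+z)-R_1(w)-R_1(z)$, which need not vanish, on its right-hand side). One relation cannot yield the three independent equalities $-\alpha_{1,8}=\alpha_{2,7}=\alpha_{3,6}=\alpha_{4,5}$, and it cannot touch $\alpha_{5,7}=-2\alpha_{6,6}$ at all, since the monomials $x_5x_7$ and $x_6^2$ do not occur in \eqref{R1}. Your closing paragraph does identify the right structural object --- a single generator $\Phi$ of the defect line in $\mathcal{L}(4\Theta)$, with $\alpha_{ij}=\lambda_{ij}\Phi$ --- and pinning that down (e.g.\ via the $A[2]$-action as in \cite{StollKummer}) is indeed the plausible route to all four parts at once; but until that is carried out, none of (a)--(d) is proved.
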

\begin{rk}
The values $\alpha_{ij}(x,y)$ depend on the choice of the pair $(i_0,j_0)\in I\setminus
E$, but note that the assertions of Conjecture~\ref{kg3conj} are independent of this
choice.
\end{rk}
The naive height $h$ on the Kummer surface associated to a Jacobian surface $A$ can be used to
define and compute a canonical height $\hat{h}$ on $A$, which has several applications. 
See Flynn-Smart~\cite{FlynnSmart} for the construction and an algorithm for the computation of
$\hat{h}$, and~\cite{StollH2} for improvements due to Stoll.
For this application, one does not have to work with the biquadratic forms $B_{ij}$, but rather with the quartic duplication polynomials $\delta$ which, however, were originally derived from the $B_{ij}$. 
If we assume the validity of the first two parts of Conjecture~\ref{kg3conj}, then we can
find analogs of these polynomials in the genus~3 situation which again turn out to be
quartic.

More precisely,  we define
\[
    \delta'_i(x):=B'_{i8}(x,x)\in k[x]\quad\text{ for }i=2,\ldots,8,
\]
and 
\[
    \delta'_1(x):=\frac{4B'_{18}(x,x)+R(x)}{3}\in k[x],
\]
where $R(x)$ is a certain quartic relation on $K$ which we use to get rid of the
denominators in $\frac{4}{3}B_{18}(x,x)$.
Let $\delta'(x):=(\delta'_1(x),\ldots,\delta'_8(x))$.
We take the $\delta'_i$ as our candidates for the duplication polynomials on $K$.

As in the genus~2 situation, we want that the set $(0,\ldots,0,1)$ of Kummer coordinates
of the origin is mapped to itself by the duplication map.
This is required by the canonical height algorithms in~\cite{FlynnSmart} and
\cite{StollH2}.
In our situation we have
\[
 \delta'(0,0,0,0,0,0,0,1)=(0,0,0,0,0,0,0,f_7^2).
\]
But this can be fixed easily by a simple change of models of $K$
using the map
    \[
\tau(x_1,\ldots,x_8)=(x_1,\ldots,x_7,f_7x_8).
    \]
Setting
    \[
        \delta := \frac{1}{f^2_7}(\tau\circ\delta'\circ\tau^{-1})
    \]
    we find
\begin{itemize}
\item[1)] $\delta_i\in\Z[f_0,\ldots,f_7][x_1,\ldots,x_8]$ for all $i=1,\ldots,8$;
\item[2)] $\delta(0,0,0,0,0,0,0,1)=(0,0,0,0,0,0,0,1)$.
\end{itemize}

Based on extensive numerical evidence, we conjectured the following result in an earlier
version of this article (and in~\cite{MuellerThesis}):
\begin{thm}\label{deltaconj}
If $P\in A$, then
\[
\delta'(\kappa(P))=\kappa(2P)
\]
and
\[
\delta(\tau(\kappa(P)))=\tau(\kappa(2P)).
\]
\end{thm}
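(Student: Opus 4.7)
The plan is to deduce the theorem from the duplication formulae Stoll constructs in~\cite{StollKummer} for his embedding~$\xi$, together with the linear change of coordinates provided by Remark~\ref{StollTrans}. Concretely, Stoll produces a tuple of homogeneous quartic polynomials $\Delta = (\Delta_1, \ldots, \Delta_8)$, with coefficients in $k[f_0, \ldots, f_7]$, such that $\Delta(\xi(P))$ and $\xi(2P)$ coincide as projective points in $\BP^7$ for every $P \in A$. These are the raw data that must be repackaged in order to match $\delta'$.

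First I would translate Stoll's duplication polynomials into Stubbs' coordinates. Remark~\ref{StollTrans} furnishes an $8 \times 8$ matrix $M$ over $k[f_0, \ldots, f_7]$, with $\det M$ a signed power of $f_7$, such that $\xi = M\kappa$. Setting
\[
\tilde\delta(x) := M^{-1}\cdot\Delta(Mx)
\]
and clearing denominators, we obtain a tuple of homogeneous quartic polynomials in $x_1, \ldots, x_8$ with the property that $\tilde\delta(\kappa(P)) = \kappa(2P)$ projectively, for every $P \in A$. Thus $\tilde\delta$ realises the duplication morphism $[2]\colon K \to K \hookrightarrow \BP^7$ in Stubbs' coordinates.

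Next I would match $\delta'$ with $\tilde\delta$ inside the homogeneous coordinate ring of $K$. Both tuples will be shown to give the same morphism $[2]\colon K \to \BP^7$, i.e.\ to define proportional sections of $\mathcal{O}_K(4)$; this is equivalent to the existence of a nonzero scalar $\lambda \in k^\times$ such that
\[
\delta'_i - \lambda\,\tilde\delta_i \in I(K) \cap k[x_1, \ldots, x_8]_4 \qquad (i = 1, \ldots, 8).
\]
By Theorem~\ref{g3rels}, this degree-$4$ piece of the ideal is $70$-dimensional, with an explicit basis given by $R_2, \ldots, R_{35}$ together with the products $\kappa_i\kappa_j R_1$. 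Verifying the displayed inclusion therefore amounts to solving a single linear system over $\Q(f_0, \ldots, f_7)$, which can be carried out in \texttt{Magma}. The value of $\lambda$ is pinned down by evaluating at the origin: from the formulas for $\delta'$ one reads off $\delta'(\kappa(O)) = (0, \ldots, 0, f_7^2)$, whereas, after normalising $\Delta$ so that $\tilde\delta(\kappa(O)) = (0, \ldots, 0, 1)$, we are forced to take $\lambda = f_7^2$. This establishes the first identity $\delta'(\kappa(P)) = \kappa(2P)$; the second, $\delta(\tau(\kappa(P))) = \tau(\kappa(2P))$, then falls out by unwinding the definition $\delta = f_7^{-2}\,\tau\circ\delta'\circ\tau^{-1}$ and using that both sides are to be read projectively.

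The principal obstacle is the size of the algebraic data involved: each $\delta'_i$ and each $\tilde\delta_i$ is a homogeneous quartic in eight variables whose coefficients are themselves polynomials in $f_0, \ldots, f_7$, so the final verification is only feasible with the aid of a computer algebra system. Once Stoll's $\Delta$ has been made explicit, however, the argument is purely mechanical, consisting of a linear change of variables, a normalisation at the origin, and the solution of one linear system over the rational function field $\Q(f_0, \ldots, f_7)$.
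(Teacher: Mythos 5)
Your proposal is correct and follows essentially the same route as the paper: pull Stoll's duplication polynomials for $\xi$ back to Stubbs' coordinates via the linear transformation of Remark~\ref{StollTrans}, and check by a {\tt Magma} computation that the result agrees with $\delta'$ modulo the relations $R_1,\ldots,R_{35}$. One small remark: you do not actually need Theorem~\ref{g3rels} (and hence no genericity assumption) for this step, since it suffices that $R_1,\ldots,R_{35}$ vanish on $K$, not that they generate the full ideal.
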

\begin{proof}
  In~\cite{StollKummer}, Stoll constructs polynomials which represent duplication on the image of $\xi$.
  It suffices to show that the map on $K$ obtained from his duplication map using the
transformation in Remark~\ref{StollTrans} coincides with the map $\delta'$, at least modulo the
relations $R_1,\ldots,R_{35}$.
To this end, we have used {\tt Magma}, see~\cite{HP}.
\end{proof}


\end{document}